\documentclass{amsart}[10pt]

\usepackage{amsrefs}
\usepackage[all]{xy}
\usepackage{syntonly}
\usepackage{hyperref}
\usepackage{amsfonts}
\usepackage{amssymb}
\usepackage{amsmath}
\usepackage{amsthm}
\usepackage[inline]{enumitem}
\usepackage{tikz-cd}
\usepackage{graphics}
\usepackage{graphicx}
\usepackage{color}
\usepackage{comment}
\usepackage{caption,lipsum}
\usepackage{framed}

\usepackage{lineno}

\usepackage{multirow}

\usepackage{footmisc}
\usepackage{todonotes}

\newtheorem{theorem}{Theorem}[section]

\newtheorem{definition}[theorem]{Definition}
\newtheorem{lemma}[theorem]{Lemma}
\newtheorem{proposition}[theorem]{Proposition}

\newtheorem{globalClaim}{Claim}[subsection]


\newtheorem{fact}[theorem]{Fact}


\usepackage{stmaryrd}

\interfootnotelinepenalty=10000

\begin{document}

\title[]{Vop\v{e}nka's Principle, Maximum Deconstructibility, and singly-generated torsion classes}

\author{Sean Cox}
\email{scox9@vcu.edu}
\address{
Department of Mathematics and Applied Mathematics \\
Virginia Commonwealth University \\
1015 Floyd Avenue \\
Richmond, Virginia 23284, USA 
}

\date{\today}

\thanks{Partially supported by NSF grant DMS-2154141.}

\subjclass[2010]{16E30, 16D40, 03E75, 16D90, 18G25, 16B70}

\begin{abstract}
Deconstructibility is an often-used sufficient condition on a class $\mathcal{C}$ of modules that allows one to carry out homological algebra \emph{relative to $\mathcal{C}$}.  The principle \textbf{Maximum Deconstructibility (MD)} asserts that a certain necessary condition for a class to be deconstructible is also sufficient.  MD implies, for example, that the classes of Gorenstein Projective modules, Ding Projective modules, their relativized variants, and all torsion classes are deconstructible over any ring.  MD was known to follow from Vop\v{e}nka's Principle and imply the existence of an $\omega_1$-strongly compact cardinal.  We prove that MD is equivalent to Vop\v{e}nka's Principle, and to the assertion that each torsion class of abelian groups is generated by a single group within the class (yielding the converse of a theorem of G\"obel and Shelah).
\end{abstract}

\maketitle


\section{Introduction}

For a fixed unital, associative ring $R$, a class $\mathcal{C}$ of (left) $R$-modules is called a \emph{precovering class} if for every module $K$, there is some (not necessarily unique) morphism $\pi_K: C_K \to K$ with $C_K \in \mathcal{C}$ such that every morphism from a member of $\mathcal{C}$ into $K$ factors (not necessarily uniquely) through $\pi$:
\begin{equation*}
\begin{tikzcd}
C_K \arrow[r,"\pi_K"] & K \\
& C' \in \mathcal{C} \arrow[u, "\rho"'] \arrow[ul, dotted]
\end{tikzcd}
\end{equation*}

\noindent The same notion appears more generally in category theory, where a precovering class is known as a \emph{weakly coreflective} class, typically under the additional assumption that $\mathcal{C}$ is retract-closed.

Determining whether a given class is precovering (or weakly coreflective) is a common problem.  For example, whether the class of \emph{Gorenstein Projective} modules is a precovering class (over every ring) is a well-known open problem (\cite{MR2529328}, \cite{MR3621667}, \cite{MR1363858}, \cite{MR1753146}, \cite{MR3690524},  \cite{MR3598789}, \cite{MR3459032}, \cite{MR3760311}, \cite{MR2038564}, \cite{MR3839274}, \cite{MR4115324}, \cite{MR2283103},  \cite{MR2737778}, \cite{MR3473859}, \cite{MR4166732}), though some relative consistency results in set theory are known (\cite{cortes2023module}, \cite{Cox_MaxDecon}).

\emph{Deconstructibility} of a class is an often used sufficient condition for the class to be precovering.  The notion of a deconstructible class grew out of Eklof and Trlifaj's contribution to the proof of the Flat Cover Conjecture (\cite{MR1832549}, \cite{MR1798574}).  A class $\mathcal{C}$ is deconstructible if there is a \textbf{set} $\mathcal{C}_0 \subset \mathcal{C}$ such that $\mathcal{C}$ is exactly the closure of $\mathcal{C}_0$ under \emph{filtrations} (also known as \emph{transfinite extensions}, see Section \ref{sec_Prelims}).  This is very similar to the notion of the cellular closure of a set of morphisms, and the key fact---which goes via Quillen's Small Object Argument---is that deconstructibility of a class implies that the class is precovering (\cite{MR2822215}).

Deconstructible classes are closed under filtrations and  ``eventually almost everywhere closed under quotients" (see Section \ref{sec_Prelims}).  The principle \textbf{Maximum Deconstructibility} asserts that the converse holds.  This principle was isolated in \cite{Cox_MaxDecon} mainly because it implies that for any ring $R$ and every class $\mathcal{X}$ of $R$-modules, the class of $\mathcal{X}$-Gorenstein Projective and $\mathcal{X}$-Ding projective modules are deconstructible, and hence (by \cite{MR2822215}) precovering classes.

It was known that Vop\v{e}nka's Principle implies Maximum Deconstructibility (\cite{Cox_MaxDecon}), and that Maximum Deconstructibility implies the existence of an $\omega_1$-strongly compact cardinal (\cite{MR4835277}).  We prove that Vop\v{e}nka's Principle and Maximum Deconstructibility are equivalent, as part of our Theorem \ref{thm_Main} below.  

Theorem \ref{thm_Main} also provides information about \emph{torsion classes}, whose basic theory was worked out by Dickson~\cite{MR0191935}.  A class of abelian groups is a \textbf{torsion class} if it is closed under direct sums,\footnote{By this we will always mean closure under all  \emph{set-indexed} direct sums.} homomorphic images, and group extensions.  A torsion class is \textbf{singly-generated} if it is the closure of a single group under homomorphic images, extensions, and direct sums.  G\"obel and Shelah proved that Vop\v{e}nka's Principle implies that every torsion class is singly-generated (\cite{MR0780487}, Remark 4.4; see also Ad\'amek-Rosick\'y~\cite{MR1085935}).  Our Theorem \ref{thm_Main} shows this is actually an equivalence.  

\begin{theorem}\label{thm_Main}
The following are equivalent:
\begin{enumerate}
 \item\label{item_VP} Vop\v{e}nka's Principle 
 \item\label{item_MD} Maximum Deconstructibility 
 \item\label{item_MD_weak} Every class of abelian groups that is closed under filtrations and quotients\footnote{By \emph{closure under quotients} we mean that if $A \subset B$ and \textbf{both} $A$ \textbf{and} $B$ are in the class, then is $B/A$ is in the class.  This is weaker than \emph{closure under homomorphic images}, which only assumes $B$ is in the class.} is deconstructible.

 \item\label{item_TorsionClassDecon} Every torsion class of abelian groups is deconstructible.
  
  \item\label{item_TorGenSet} Every torsion class of abelian groups is singly-generated.
\end{enumerate}
\end{theorem}

The equivalence of \eqref{item_VP} with \eqref{item_TorGenSet} is similar to an equivalence proved by Ben Yassine and Trlifaj~\cite{MR4817675}, though the classes they dealt with were not torsion classes.  Both their proof and our proof make crucial use of a functor constructed by Prze\'zdziecki~\cite{MR3187657}.

\section{Preliminaries}\label{sec_Prelims}

An isomorphism-closed class $\mathcal{C}$ of modules is:
\begin{enumerate*}
 \item \textbf{closed under (homomorphic) images} if whenever $C \in \mathcal{C}$, any homomorphic image of $C$ is a member of $\mathcal{C}$.  Equivalently, $C/A \in \mathcal{C}$ whenever $A \subset C$ and $C \in \mathcal{C}$.
 \item \textbf{closed under quotients} if for all modules $C_0$ and $C_1$: if $C_0$ is a submodule of $C_1$ and \emph{both} $C_0$ \emph{and} $C_1$ are members of $\mathcal{C}$, then $C_1/C_0$ is a member of $\mathcal{C}$.
\end{enumerate*}
Clearly, closure under homomorphic images implies closure under quotients.  A very weak form of closure under quotients, called \textbf{eventual almost everywhere closure under quotients}, was isolated in \cite{Cox_MaxDecon} as a crucial property of the class of Gorenstein Projective modules.  We will not need the definition in this paper; it will suffice to know that eventual almost-everywhere closure under quotients follows trivially from closure under quotients.\footnote{But in general they are not equivalent; e.g., the class of free abelian groups is not closed under quotients, but it is eventually almost everywhere closed under quotients.}

Given an ordinal $\alpha$, $V_\alpha$ refers to $\alpha$-th level of the set-theoretic cumulative hierarchy; it is the collection of sets of rank less than $\alpha$.  Every set is a member of $V_\alpha$ for some ordinal $\alpha$, and each $V_\alpha$ is a set.  See Jech~\cite{MR1940513} (Chapter 6) for details.

Though we will not need the exact definition of filtrations or filtration closures in this paper, we briefly explain what they are, since they are so prominent in the field.  Given a class $\mathcal{C}$ of modules, a \textbf{$\boldsymbol{\mathcal{C}}$-filtration} is a sequence $\langle X_\alpha \ : \ \alpha \le \zeta \rangle$ of modules indexed by ordinals up to and including some ordinal $\zeta$, such that
\begin{enumerate*}
 \item $X_0 = 0$
 \item for each $\alpha$, $X_{\alpha+1}/X_\alpha$ is isomorphic to some member of $\mathcal{C}$; and
 \item (continuity) for each limit ordinal $\gamma$, $X_\gamma = \bigcup_{\alpha < \gamma} X_\alpha$.
\end{enumerate*}
\noindent  A module $M$ is \textbf{$\boldsymbol{\mathcal{C}}$-filtered} if it is the union of some $\mathcal{C}$-filtration.  A class $\mathcal{C}$ is \textbf{filtration-closed} (or ``closed under transfinite extensions") if every $\mathcal{C}$-filtered module is a member of $\mathcal{C}$.

We employ several closure operations:
\begin{itemize}
 \item $\textbf{Filt}\boldsymbol{(\mathcal{C})}$ denotes the closure of $\mathcal{C}$ under filtrations.  A class $\mathcal{C}$ is \textbf{deconstructible} if there exists a set $\mathcal{C}_0 \subset \mathcal{C}$ such that $\mathcal{C} = \text{Filt}(\mathcal{C}_0)$.
 \item $\textbf{I}\boldsymbol{(\mathcal{C})}$ is the closure of $\mathcal{C}$ under homomorphic images (which is simply the class of homomorphic images of members of $\mathcal{C}$)
 \item $\textbf{E}\boldsymbol{(\mathcal{C})}$ is the closure of $\mathcal{C}$ under extensions (where a class $\mathcal{A}$ is closed under extensions if whenever $0 \to A_0 \to M \to A_1 \to 0$ is a short exact sequence with each $A_i \in \mathcal{A}$, then $M \in \mathcal{A}$).

 \item $\textbf{D}\boldsymbol{(\mathcal{C})}$ is the closure of $\mathcal{C}$ under all (set-indexed) direct sums. For an ordinal $\alpha$, we let $\boldsymbol{\textbf{D}_\alpha(\mathcal{C})}$ denote the class of all $\alpha$-indexed direct sums of members of $\mathcal{C}$.

  \item $\textbf{T}\boldsymbol{(\mathcal{C})}$ is the torsion closure of $\mathcal{C}$; i.e., $\text{T}(\mathcal{C})$ is the smallest class containing $\mathcal{C}$ that is closed under direct sums, extensions, and homomorphic images.  If $\mathcal{C} = \{ C \}$ is a singleton we write $T(C)$ instead of $T(\{ C \})$.  Recall from the introduction that a torsion class $\mathcal{C}$ is \textbf{singly-generated} if it is the torsion closure of a single group; this is equivalent to asserting that $\mathcal{C}$ is the torsion closure of a \textbf{set} of groups.\footnote{Since, if $\mathcal{C}_0$ is a set of groups, its torsion closure is the same as the torsion closure of the single group $\bigoplus_{C \in \mathcal{C}_0} C$.}
\end{itemize}

Let $\mathcal{C}$ be a class of abelian groups.  $\boldsymbol{{}^{\perp_0} \mathcal{C}}$ denotes the class
\[
\{ A \ : \ \text{Hom}(A,C)=0 \text{ for every } C \in \mathcal{C} \}.
\]
\noindent $\boldsymbol{\mathcal{C}^{\perp_0}}$ is defined dually. If $\mathcal{C} = \{ C \}$ is a singleton, we write ${}^{\perp_0} C$ and $C^{\perp_0}$ instead of ${}^{\perp_0} \{ C \}$ and $\{ C \}^{\perp_0}$, respectively.  Recall that a torsion class is a class closed under set-sized direct sums, homomorphic images, and extensions; in fact, $\mathcal{T}$ is a torsion class if and only if it is of the form ${}^{\perp_0} \mathcal{Y}$ for some class $\mathcal{Y}$, if and only if $\mathcal{T} = {}^{\perp_0} \left( \mathcal{T}^{\perp_0} \right)$ (cf.\ \cite{MR0191935}, Theorem 2.3).

\begin{fact}\label{fact_OperationsPreserve}
If $\mathcal{Y}$ is a class of abelian groups and $\mathcal{C} \subseteq {}^{\perp_0} \mathcal{Y}$, then $\text{I}(\mathcal{C})$, $\text{E}(\mathcal{C})$, $D(\mathcal{C})$, and $\text{Filt}(\mathcal{C})$ are all contained in ${}^{\perp_0} \mathcal{Y}$.
\end{fact}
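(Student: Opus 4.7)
The plan is to show that the class $\mathcal{K} := {}^{\perp_0}\mathcal{Y}$ is itself closed under homomorphic images, extensions, arbitrary direct sums, and filtrations; since all four operations $\mathrm{I}, \mathrm{E}, \mathrm{D}, \mathrm{Filt}$ produce the smallest classes containing $\mathcal{C}$ that are closed under the respective operations, the inclusions $\mathrm{I}(\mathcal{C}), \mathrm{E}(\mathcal{C}), \mathrm{D}(\mathcal{C}), \mathrm{Filt}(\mathcal{C}) \subseteq \mathcal{K}$ follow immediately from $\mathcal{C} \subseteq \mathcal{K}$. Each of the four closure properties of $\mathcal{K}$ is a routine Hom-calculation, so the proof is essentially a checklist.

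For homomorphic images, given a surjection $\pi \colon A \twoheadrightarrow B$ with $A \in \mathcal{K}$ and any $Y \in \mathcal{Y}$, any $f \colon B \to Y$ yields $f \circ \pi \colon A \to Y$, which is zero because $A \in \mathcal{K}$; surjectivity of $\pi$ then forces $f = 0$. For extensions, consider $0 \to A_0 \to M \to A_1 \to 0$ with $A_0, A_1 \in \mathcal{K}$ and an arbitrary $f \colon M \to Y$: the restriction to $A_0$ is zero, so $f$ factors through $A_1$ as some $\bar f \colon A_1 \to Y$, which is again zero since $A_1 \in \mathcal{K}$. For direct sums, use the universal property $\mathrm{Hom}\bigl(\bigoplus_i A_i, Y\bigr) \cong \prod_i \mathrm{Hom}(A_i, Y)$, which vanishes if each factor does.

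For filtration closure, take a $\mathcal{K}$-filtration $\langle X_\alpha : \alpha \le \zeta \rangle$ and prove $X_\alpha \in \mathcal{K}$ by induction on $\alpha$. The base case $X_0 = 0$ is trivial; the successor step applies the already-established extension closure to the short exact sequence $0 \to X_\alpha \to X_{\alpha+1} \to X_{\alpha+1}/X_\alpha \to 0$; the limit step uses continuity, since any $f \colon X_\gamma \to Y$ restricts to zero on each $X_\alpha$ (by the induction hypothesis), and $X_\gamma = \bigcup_{\alpha<\gamma} X_\alpha$ forces $f = 0$. There is no real obstacle here: the whole fact is a direct consequence of left-exactness of $\mathrm{Hom}(-, Y)$ together with the observation that $\mathrm{Hom}(-, Y)$ turns colimits in the first variable into limits. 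The only minor care needed is in the filtration step, to keep track of which closure property is being invoked at successor versus limit stages.
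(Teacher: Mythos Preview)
Your proof is correct and follows essentially the same approach as the paper: show that ${}^{\perp_0}\mathcal{Y}$ itself is closed under images, extensions, direct sums, and filtrations, and conclude by minimality. The paper's proof is much terser---it declares the cases of $\mathrm{I}$, $\mathrm{E}$, $\mathrm{D}$ to be routine and handles $\mathrm{Filt}$ by citing closure of ${}^{\perp_0}\mathcal{Y}$ under colimits and extensions from an external reference---whereas you spell out each verification explicitly, including the transfinite induction for the filtration case; but the underlying argument is the same.
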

\begin{proof}
That this is true of $I$, $E$, $D$, and $T$ is routine.  That it also holds of $\text{Filt}$ follows from closure of ${}^{\perp_0} \mathcal{Y}$ under colimits and extensions (cf.\ \cite{MR4835277}, Lemma 6).
\end{proof}

\section{Proof of Theorem \ref{thm_Main}}

The implication \eqref{item_VP} $\implies$ \eqref{item_MD} was proved in \cite{Cox_MaxDecon}.  Statement \eqref{item_MD} trivially implies statement \eqref{item_MD_weak}, since closure under quotients implies eventual almost everywhere closure under quotients.  Statement \eqref{item_MD_weak} implies statement \eqref{item_TorsionClassDecon} because torsion classes of abelian groups are closed under filtrations (\cite{MR4835277}, Lemma 6) and under quotients (since closure under homomorphic images trivially implies closure under quotients).  Statement \eqref{item_TorsionClassDecon} implies statement \eqref{item_TorGenSet} as follows:  suppose $\mathcal{A}$ is a torsion class and is deconstructible; say $\mathcal{A} = \text{Filt}(\mathcal{A}_0)$ for some set $\mathcal{A}_0 \subset \mathcal{A}$.  We claim that
\begin{equation}\label{eq_T_is_clos_of_T0}
\mathcal{A} =T( \mathcal{A}_0) \tag{*}
\end{equation}
Since $\mathcal{A}$ is a torsion class, it is closed under direct sums, images, and extensions,  so the $\supseteq$ direction of \eqref{eq_T_is_clos_of_T0} is trivial.  And torsion classes are filtration-closed by Fact \ref{fact_OperationsPreserve}; so $T (\mathcal{A}_0)$ contains $\text{Filt}(\mathcal{A}_0)= \mathcal{A}$.  So $\mathcal{A}$ is generated by a set, and hence (as remarked in Section \ref{sec_Prelims}) by a single group.

To prove the \eqref{item_TorGenSet} $\implies$ \eqref{item_VP} direction of Theorem \ref{thm_Main}, it will help to have an explicit hierarchy of sets whose union is the torsion closure.

\begin{proposition}\label{prop_IE_set}
If $\mathcal{C}$ is a set, then for any fixed ordinal $\alpha$,  $ I(E(D_\alpha(\mathcal{C})))$ has a set of representatives containing $\mathcal{C}$. 
\end{proposition}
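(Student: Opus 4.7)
The plan is a routine cardinal bound argument, followed by a standard encoding of the bounded isomorphism types inside a set. Let $\mu = \aleph_0 + |\alpha| + \sup\{|C| : C \in \mathcal{C}\}$, which is a well-defined cardinal since $\mathcal{C}$ is a set. The main assertion to prove is that every member of $I(E(D_\alpha(\mathcal{C})))$ has cardinality at most $\mu$; once this is in hand, the existence of a set of representatives is immediate.

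First, any $\alpha$-indexed direct sum $\bigoplus_{i<\alpha} C_i$ with each $C_i\in\mathcal{C}$ has underlying set of cardinality at most $|\alpha|\cdot\mu = \mu$, so the bound holds throughout $D_\alpha(\mathcal{C})$. Next, I would unpack $E(\cdot)$ as the smallest class closed under one-step extensions containing its input, realized as a transfinite iteration; since any short exact sequence $0\to A_0\to M\to A_1\to 0$ with $|A_0|,|A_1|\le\mu$ yields $|M|\le |A_0|+|A_1|\le\mu$, a routine transfinite induction shows the bound $\mu$ is preserved at every stage. Finally, homomorphic images cannot increase cardinality, so the bound persists through $I(\cdot)$.

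Now the class of abelian groups of cardinality $\le\mu$ has a set of isomorphism representatives, since every such group is isomorphic to some abelian group structure on a subset of $\mu$, and the collection of all such structures is a set (it lies in $V_\beta$ for some fixed ordinal $\beta$). Choose one element of $I(E(D_\alpha(\mathcal{C})))$ from each of the resulting isomorphism classes to form a set $\mathcal{S}_0 \subseteq I(E(D_\alpha(\mathcal{C})))$, and put $\mathcal{S} := \mathcal{S}_0 \cup \mathcal{C}$. Then $\mathcal{S}$ is a set (being the union of two sets), contains $\mathcal{C}$, and is a set of representatives for $I(E(D_\alpha(\mathcal{C})))$.

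There is no real obstacle here; the only things to be careful about are (i) pushing the cardinality bound through the transfinite iteration defining $E(\cdot)$ by induction on the stage, and (ii) observing that ``contains $\mathcal{C}$'' can be arranged trivially by adjoining $\mathcal{C}$ at the end, so that one need not worry about $\mathcal{C}$ containing multiple isomorphic copies of a group.
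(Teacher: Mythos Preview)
Your argument is correct and follows essentially the same route as the paper's: both check that each of $D_\alpha$, $E$, and $I$ preserves having a set of representatives when applied to a set, and then adjoin $\mathcal{C}$ at the end. The only difference is cosmetic---the paper observes that $E(\mathcal{Y})$ is already reached after finitely many one-step extensions (so no transfinite induction is needed), whereas you phrase the same bound via an explicit cardinality estimate carried through a transfinite iteration; both work.
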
 
\begin{proof}
Clearly if $\mathcal{C}$ is a set then $D_\alpha(\mathcal{C})$ has a set of representatives.   And for any class $\mathcal{Y}$, every member of $E(\mathcal{Y})$ can be obtained from $\mathcal{Y}$ in a finite number of extensions; so $E(D_\alpha(\mathcal{C}))$ has a set of representatives. Similarly for $I(-)$ (but even simpler, since $I(-)$ is just the class of images).  So $I(E(D_\alpha(\mathcal{C})))$ has a set of representatives (which can be taken to include the set $\mathcal{C}$).
\end{proof}

\begin{definition}
Let $\mathcal{C}$ be a class of abelian groups.  Recursively define a sequence $\langle T^{\mathcal{C}}_\alpha \ : \ \alpha \in \text{ORD} \rangle$ as follows: 
\begin{itemize}
 \item $T^{\mathcal{C}}_0 = \emptyset$
 \item If $T^{\mathcal{C}}_\alpha$ is defined, let $T^{\mathcal{C}}_{\alpha+1}$ be a set of representatives for 
 \[
 I\left( E \left( D_{\alpha} \left( T^{\mathcal{C}}_{\alpha} \cup \left( V_{\alpha+1} \cap \mathcal{C} \right) \right) \right) \right)
 \]
that contains $T^{\mathcal{C}}_{\alpha} \cup \left( V_{\alpha+1} \cap \mathcal{C} \right)$. (Note that as long as $T^{\mathcal{C}}_\alpha$ is itself a set, then $T^{\mathcal{C}}_{\alpha} \cup \left( V_{\alpha+1} \cap \mathcal{C} \right)$ is a set, so by Proposition \ref{prop_IE_set} the displayed class has a set of representatives.)
 
 \item If $\gamma$ is a limit ordinal, 
 \[
 T^{\mathcal{C}}_\gamma:= \bigcup_{\alpha < \gamma} T^{\mathcal{C}}_\alpha.
 \]
\end{itemize}
Set $T^{\mathcal{C}}_\infty:= \bigcup_{\alpha \in \text{ORD}} T^{\mathcal{C}}_\alpha$. 
\end{definition}

\begin{lemma}\label{lem_HierarchLemma}
For any class $\mathcal{C}$:  
\begin{enumerate}
 \item\label{item_contained} $T^{\mathcal{C}}_\alpha \subseteq T^{\mathcal{C}}_\beta$ whenever $\alpha \le \beta$
 \item\label{item_ContainsTValpha} $V_\alpha \cap \mathcal{C} \subseteq T^{\mathcal{C}}_\alpha$ for all $\alpha$

 \item\label{eq_IsTorsionClosure} $ T^{\mathcal{C}}_\infty=T(\mathcal{C}) $ (i.e., $T^{\mathcal{C}}_\infty$ is the torsion closure of $\mathcal{C}$).

\end{enumerate}
\end{lemma}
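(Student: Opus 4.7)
My plan is to dispose of (1) and (2) by straightforward transfinite inductions, leaving the substance of the lemma in (3). For (1), I fix $\alpha$ and induct on $\beta \ge \alpha$; the successor case uses the fact that the definition of $T^{\mathcal{C}}_{\beta+1}$ explicitly takes a set of representatives \emph{containing} $T^{\mathcal{C}}_\beta \cup (V_{\beta+1} \cap \mathcal{C})$, and the limit case is immediate from the union. For (2), I induct on $\alpha$: the successor step is again by construction, and at a limit ordinal $\gamma$, any $C \in V_\gamma \cap \mathcal{C}$ lies in some $V_\beta \cap \mathcal{C}$ with $\beta < \gamma$, so $C \in T^{\mathcal{C}}_\beta \subseteq T^{\mathcal{C}}_\gamma$ by the inductive hypothesis and the union clause.

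For (3) I prove both inclusions of $T^{\mathcal{C}}_\infty = T(\mathcal{C})$. The inclusion $T^{\mathcal{C}}_\infty \subseteq T(\mathcal{C})$ is a transfinite induction showing $T^{\mathcal{C}}_\alpha \subseteq T(\mathcal{C})$ for all $\alpha$. At the successor step, the seed $T^{\mathcal{C}}_\alpha \cup (V_{\alpha+1} \cap \mathcal{C})$ lies in $T(\mathcal{C})$ by the inductive hypothesis together with $\mathcal{C} \subseteq T(\mathcal{C})$; since $T(\mathcal{C})$ is a torsion class, it is closed under $D$, $E$, and $I$, so the outer $I(E(D_\alpha(\cdots)))$ stays in $T(\mathcal{C})$, and hence its representative set $T^{\mathcal{C}}_{\alpha+1}$ does too. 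The limit step is trivial.

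The substantive direction is $T(\mathcal{C}) \subseteq T^{\mathcal{C}}_\infty$. Here I would argue that $T^{\mathcal{C}}_\infty$ is itself a torsion class containing $\mathcal{C}$, after which the inclusion is forced by $T(\mathcal{C})$ being the \emph{smallest} such class by definition. Containment of $\mathcal{C}$ follows from (2), since every abelian group sits in some $V_\alpha$. For closure under images, extensions, and set-indexed direct sums, the key observation is that given any family of inputs already in $T^{\mathcal{C}}_\infty$, part (1) lets me pick a single sufficiently large ordinal $\delta$ such that all inputs lie in $T^{\mathcal{C}}_\delta$ (and, for a direct sum indexed by a set $I$, such that $|I| \le \delta$ as well); the output of the operation in question then lies in $I(E(D_\delta(T^{\mathcal{C}}_\delta)))$, which is represented in $T^{\mathcal{C}}_{\delta+1} \subseteq T^{\mathcal{C}}_\infty$.

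The only place I expect to pause is the zero-padding used to regard a single element, a short exact sequence, or a smaller direct sum as a member of $D_\delta(T^{\mathcal{C}}_\delta)$. This requires $0 \in T^{\mathcal{C}}_\delta$ for every $\delta \ge 1$, which holds because the empty direct sum equals $0$, giving $0 \in D_0(\emptyset) \subseteq T^{\mathcal{C}}_1$, and part (1) propagates this upward. Apart from this minor bookkeeping, the lemma is essentially a repeated application of part (1) together with the explicit successor clause in the definition of the $T^{\mathcal{C}}_\alpha$-hierarchy.
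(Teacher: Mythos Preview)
Your proposal is correct and follows essentially the same approach as the paper's proof, only with far more detail: the paper dismisses (1) and (2) as ``clear from the construction,'' asserts that $T^{\mathcal{C}}_\infty$ is closed under direct sums, images, and extensions, and handles the inclusion $T^{\mathcal{C}}_\infty \subseteq T(\mathcal{C})$ by the same induction you describe (phrased for an arbitrary torsion class $\mathcal{V} \supseteq \mathcal{C}$, invoking Fact~\ref{fact_OperationsPreserve}). Your careful bookkeeping about zero-padding and choosing a common $\delta$ is exactly what lies behind the paper's one-line ``it is also clear.''
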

\begin{proof}
\eqref{item_contained} and \eqref{item_ContainsTValpha} are clear from the construction; in particular $\mathcal{C} \subseteq T^{\mathcal{C}}_\infty$.  It is also clear that $T^{\mathcal{C}}_\infty$ is closed under direct sums, images, and extensions.  Furthermore, to see it is minimal with this property, induction on $\alpha$ together with Fact \ref{fact_OperationsPreserve} imply that if $\mathcal{V}$ is a torsion class containing $\mathcal{C}$, then $\mathcal{V} \supseteq T^{\mathcal{C}}_\alpha$ for all $\alpha \in \text{ORD}$.

\end{proof}

To finish the proof, we will use of the following beautiful theorem of Prze\'zdziecki:

\begin{theorem}[Prze\'zdziecki~\cite{MR3187657}]\label{thm_PrzFunctor}
There is a functor $\mathcal{F}: \textbf{Graphs} \to \textbf{Ab}$ such that for any graphs $U$ and $V$,
\[
\text{hom}_{\textbf{Ab}}\left( \mathcal{F}U, \mathcal{F}V \right) \simeq \mathbb{Z}^{\left( \text{hom}_{\textbf{Graphs}}(U,V) \right)},
\]
where $\mathbb{Z}^{(I)}$ denotes the direct sum of $\mathbb{Z}$ with coordinates from $I$ (i.e., the free group of rank $|I|$), and $\mathbb{Z}^{(\emptyset)}$ is understood to be the trivial group 0.  
\end{theorem}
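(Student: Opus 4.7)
The statement is attributed to Prze\'zdziecki, so in the paper the ``proof'' is presumably just the citation \cite{MR3187657}. For a self-contained proof proposal, the plan is to construct $\mathcal{F}U$ combinatorially from a graph $U = (V_U, E_U)$ in a way that is arithmetically rigid enough to force every abelian group morphism between outputs to be a $\mathbb{Z}$-linear combination of morphisms arising from genuine graph morphisms.

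The natural approach is to fix a family of pairwise disjoint infinite sets of primes $P_v$ indexed by a suitable vertex-alphabet, and build $\mathcal{F}U$ out of typed copies (for instance, localizations) $\mathbb{Z}_{P_v}$ of the integers, together with ``linking'' elements attached to each edge that encode the adjacency structure. The functor $\mathcal{F}$ is then defined on morphisms by sending a graph morphism $f: U \to V$ to the abelian group morphism which, on the $v$-component of $\mathcal{F}U$, targets the $f(v)$-component of $\mathcal{F}V$ via the canonical inclusion. Functoriality is then routine, and the assignment $f \mapsto \mathcal{F}f$ is injective because the images of distinct graph morphisms are distinguishable by where they send the typed generators.

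The main obstacle is showing that every abelian group morphism $\varphi: \mathcal{F}U \to \mathcal{F}V$ decomposes as a \emph{finite} $\mathbb{Z}$-linear combination of the $\mathcal{F}f$. The strategy is to exploit the arithmetic typing: since $\text{Hom}(\mathbb{Z}_{P_v}, \mathbb{Z}_{P_w}) = 0$ whenever $P_v$ and $P_w$ are disjoint, the restriction of $\varphi$ to each typed summand must land in a very restricted target. After peeling off the vertex-coefficients, the edge-linking relations cut out precisely those vertex-to-vertex assignments that preserve adjacency, and one recovers a formal $\mathbb{Z}$-combination of genuine graph morphisms. Finiteness of the support of this combination should follow from the fact that each element of $\mathcal{F}U$ has finite support and is thus sent into a finitely-generated piece of $\mathcal{F}V$.

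The delicate point, and the principal technical obstacle, is calibrating the edge-linking relations so that they are restrictive enough to eliminate non-combinatorial morphisms yet permissive enough that every graph morphism yields a distinct nonzero abelian group morphism; striking this balance is the combinatorial-arithmetic heart of the argument carried out in \cite{MR3187657}, and a full proof would require faithfully reproducing that construction.
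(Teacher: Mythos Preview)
You are correct that the paper provides no proof of this theorem; it is quoted from Prze\'zdziecki's work and supported solely by the citation \cite{MR3187657}. Your additional sketch of a possible construction is in the right spirit (arithmetic typing via localizations to rigidify the category of outputs), but since the paper itself offers nothing beyond the citation, there is no paper-proof against which to compare it.
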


Now we prove the  \eqref{item_TorGenSet} $\implies$ \eqref{item_VP} direction of Theorem \ref{thm_Main}.  Suppose Vop\v{e}nka's Principle fails; then by \cite{MR1294136} there is a rigid sequence
\[
\langle X_\alpha \ : \ \alpha \in \text{ORD} \rangle
\]
of graphs, rigid in the sense that $\text{hom}_{\textbf{Graphs}}(X_\alpha,X_\beta) = \emptyset$ whenever $\alpha \ne \beta$.  Let $\mathcal{F}$ be the functor from Theorem \ref{thm_PrzFunctor}.  Let $G_\alpha:= \mathcal{F}X_\alpha$ for each $\alpha \in \text{ORD}$.  Then
\begin{equation*}
\alpha \ne \beta \ \implies \ \text{hom}_{\textbf{Ab}}(G_\alpha,G_\beta)\simeq \mathbb{Z}^{\left(\text{hom}_{\text{graphs}} \left(X_\alpha,X_\beta \right) \right)}=\mathbb{Z}^{\left( \emptyset \right)} = 0.
\end{equation*}

Hence,
\[
\mathcal{G}:=\{ G_\alpha \ : \ \alpha \in \text{ORD} \}
\]
is a rigid proper class of abelian groups.  We prove that $T(\mathcal{G})$ (the torsion closure of $\mathcal{G}$) is not generated by a set.  Suppose, toward a contradiction, that there is a set $\mathcal{S} \subset T(\mathcal{G})$ such that $T(\mathcal{G}) = T(\mathcal{S})$.  By Lemma \ref{lem_HierarchLemma}, $T(\mathcal{G}) = T_\infty^{\mathcal{G}}$, so there is an ordinal $\gamma^*$ such that $\mathcal{S} \subseteq T^{\mathcal{G}}_{\gamma^*}$.  Then
\[
\mathcal{S} \subseteq T^{\mathcal{G}}_{\gamma^*} \subseteq T(\mathcal{G}) \text{ and } T(\mathcal{S}) = T(\mathcal{G}).
\]
\noindent It follows that 

\begin{equation}\label{eq_GenBySetTG}
T\left( T^{\mathcal{G}}_{\gamma^*} \right)=T(\mathcal{G}) .
\end{equation}

Since $\mathcal{G}$ is a rigid proper class and $T^{\mathcal{G}}_{\gamma^*}$ is a set, we may fix, for the remainder of the proof, some nonzero $G \in \mathcal{G} \setminus T^{\mathcal{G}}_{\gamma^*}$.  

\begin{globalClaim}
$T^{\mathcal{G}}_{\gamma^*} \subseteq {}^{\perp_0} G$.
\end{globalClaim}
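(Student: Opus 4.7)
The plan is to prove the claim by transfinite induction on $\alpha \le \gamma^*$, showing that $T^{\mathcal{G}}_\alpha \subseteq {}^{\perp_0} G$. The crucial initial observation is that $G$ does not lie in $V_{\gamma^*}$: since Lemma \ref{lem_HierarchLemma}\eqref{item_ContainsTValpha} gives $V_{\gamma^*} \cap \mathcal{G} \subseteq T^{\mathcal{G}}_{\gamma^*}$ and we chose $G \notin T^{\mathcal{G}}_{\gamma^*}$, we conclude $G \notin V_\alpha$ for all $\alpha \le \gamma^*$. Consequently, for every $\alpha \le \gamma^*$, each member of $V_\alpha \cap \mathcal{G}$ is some $G_\beta$ distinct from $G$, and rigidity of the sequence $\langle G_\alpha : \alpha \in \text{ORD}\rangle$ then gives $\text{Hom}(G_\beta, G) = 0$. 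Thus $V_\alpha \cap \mathcal{G} \subseteq {}^{\perp_0} G$ for all such $\alpha$.

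With this observation in hand, the induction proceeds straightforwardly. The base case $T^{\mathcal{G}}_0 = \emptyset$ is trivial, and the limit step follows since ${}^{\perp_0} G$ is obviously closed under taking arbitrary unions of subclasses. For the successor step, assuming $T^{\mathcal{G}}_\alpha \subseteq {}^{\perp_0} G$ (with $\alpha + 1 \le \gamma^*$), combine this with the observation above to obtain
\[
T^{\mathcal{G}}_\alpha \cup (V_{\alpha+1} \cap \mathcal{G}) \subseteq {}^{\perp_0} G.
\]
Now invoke Fact \ref{fact_OperationsPreserve} (applied to the singleton class $\mathcal{Y} = \{G\}$): the class ${}^{\perp_0} G$ is closed under $D$, $E$, and $I$. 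Hence
\[
I\bigl(E\bigl(D_\alpha\bigl(T^{\mathcal{G}}_\alpha \cup (V_{\alpha+1} \cap \mathcal{G})\bigr)\bigr)\bigr) \subseteq {}^{\perp_0} G,
\]
and since $T^{\mathcal{G}}_{\alpha+1}$ is a set of representatives for this class, we get $T^{\mathcal{G}}_{\alpha+1} \subseteq {}^{\perp_0} G$.

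Taking $\alpha = \gamma^*$ finishes the proof. The only subtle point, and what one might call the main obstacle, is verifying that $G$ avoids $V_{\gamma^*}$ so that rigidity can be leveraged at every stage of the recursion; this is where the choice of $G$ from $\mathcal{G} \setminus T^{\mathcal{G}}_{\gamma^*}$ (rather than some arbitrary element of $\mathcal{G}$) is essential. Everything else is a routine induction using the closure properties of $^{\perp_0}$-classes from Fact \ref{fact_OperationsPreserve}.
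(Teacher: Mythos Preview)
Your proof is correct and follows essentially the same approach as the paper: a transfinite induction on $\alpha \le \gamma^*$ using Fact~\ref{fact_OperationsPreserve} for the successor step, with the key observation that $G \notin V_{\gamma^*}$ (via Lemma~\ref{lem_HierarchLemma}\eqref{item_ContainsTValpha}) so that rigidity forces $V_{\alpha+1} \cap \mathcal{G} \subseteq {}^{\perp_0} G$. The only difference is organizational---you isolate the $G \notin V_{\gamma^*}$ observation up front, whereas the paper embeds it inside the successor step.
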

\begin{proof}
We show by induction on $\alpha \le \gamma^*$ that $T^{\mathcal{G}}_\alpha \subseteq {}^{\perp_0} G$.  The only nontrivial step is the successor step.  Assume $T^{\mathcal{G}}_\alpha \subseteq {}^{\perp_0} G$ and that $\alpha +1 \le  \gamma^*$.  Then

\[
T^{\mathcal{G}}_{\alpha+1} \text{ is a set of representatives for } I\left( E \left( D_{\alpha} \left( T^{\mathcal{G}}_{\alpha} \cup \left( V_{\alpha+1} \cap \mathcal{G} \right) \right) \right) \right).
\]

Since $I(E(D_{\alpha}(-))$ preserves membership in ${}^{\perp_0} G$ by Fact \ref{fact_OperationsPreserve}, and the induction hypothesis tells us $T^{\mathcal{G}}_\alpha \subset {}^{\perp_0} G$, it suffices to show that $V_{\alpha+1} \cap \mathcal{G} \subset {}^{\perp_0} G$; and by rigidity of $\mathcal{G}$ it in turn suffices to know that $G \notin V_{\alpha+1} \cap \mathcal{G}$.  This is true, because $G \notin T^{\mathcal{G}}_{\gamma^*}$ and, by Lemma \ref{lem_HierarchLemma} and the assumption that $\alpha + 1 \le \gamma^*$, 
\[
V_{\alpha+1}  \cap \mathcal{G} \subseteq  V_{\gamma^*} \cap \mathcal{G} \subseteq T^{\mathcal{G}}_{\gamma^*}
\]
\end{proof}

Since ${}^{\perp_0} G$ is a torsion class and $T^{\mathcal{G}}_{\gamma^*} \subseteq {}^{\perp_0} G$ by the claim, minimality of torsion closure ensures  $T(T^{\mathcal{G}}_{\gamma^*}) \subseteq {}^{\perp_0} G$.  Together with \eqref{eq_GenBySetTG} this yields
\[
0 \ne G \in \mathcal{G} \subseteq T(\mathcal{G}) = T(T^{\mathcal{G}}_{\gamma^*}) \subseteq {}^{\perp_0} G,
\]
which is a contradiction, since $\text{id}_G$ is a nonzero member of $\text{hom}_{\textbf{Ab}}(G,G)$.

\section{Open Questions}

Given a class $\mathcal{X}$ of modules over a fixed ring, a module $G$ is called $\mathcal{X}$-Gorenstein Projective ($\mathcal{X}$-$\mathcal{GP}$) if there is an acyclic complex $P_\bullet$ of projective modules such that $G = \text{ker}(P_0 \to P_1)$ and $G \in {}^{\perp_1} \mathcal{X}$; the latter means that $\text{Ext}^1(G,X)=0$ for all $X \in \mathcal{X}$.  The Gorenstein Projective modules are the class $\mathcal{X}$-$\mathcal{GP}$, where $\mathcal{X}$ is the class of projective modules.  We refer to Cort{\'e}s-Izurdiaga and \v{S}aroch~\cite{cortes2023module} for more background on these classes, and to Cox~\cite{Cox_MaxDecon} for the proof that they have the crucial property of being eventually almost everywhere closed under quotients (despite failing, in general, to be closed under quotients).

The author's original motivation for introducing the Maximum Deconstructibility principle in \cite{Cox_MaxDecon} was to obtain the consistency, relative to large cardinals, of the scheme
\begin{quote}
 ``For any class $\mathcal{X}$ of modules (over any fixed ring), the class $\mathcal{X}$-$\mathcal{GP}$ is deconstructible".  
 \end{quote}

Does this scheme imply Vop\v{e}nka's Principle?  An affirmative answer would necessarily involve non-hereditary rings, since over a hereditary ring, for any class $\mathcal{X}$, $\mathcal{X}$-$\mathcal{GP}$ is equal to the class of projective modules, which is always deconstructible.  An affirmative answer would also need to involve some class $\mathcal{X}$ other than the class of projectives, since by \cite{cortes2023module}, a proper class of strongly compact cardinals (which is much weaker than Vop\v{e}nka's Principle) suffices to obtain deconstructibility of $\mathcal{GP}$ ($= \{ \text{projectives} \}$-$\mathcal{GP}$) over every ring.

\vspace{10em}

\noindent \textbf{Declarations}

Consent to Publish declaration: not applicable

Data availability: all cited references are publicly available

Author contribution:  all results due to the author

Competing Interests: none

\begin{bibdiv}
\begin{biblist}
\bibselect{MasterBibliography/Bibliography}
\end{biblist}
\end{bibdiv}

\end{document}